\newcommand{\Gr}{Gr\"obner }
\newcommand{\cP}{\cal{P}}
\newcommand{\cJ}{\cal{J}}
\newcommand{\cM}{\cal{M}}
\newcommand{\cI}{\cal{I}}
\newcommand{\cR}{\cal{R}}
\newcommand{\cT}{\cal{T}}
\newcommand{\Q}{\mathbb{Q}}
\newcommand{\Z}{\mathbb{Z}}
\newcommand{\K}{\cal{K}}
\newcommand{\N}{\mathbb{N}}
\newcommand{\M}{\cal{M}}
\newcommand{\R}{\mathbb{R}}
\newcommand{\lm}{\mathop{\mathrm{lm}}\nolimits}
\newcommand{\lc}{\mathop{\mathrm{lc}}\nolimits}
\newenvironment{algorithm}[1]{
%\begin{figure}[h]
  \begin{center}
    {\bf Algorithm: #1}\\*
    \begin{tabular}{|p{110mm}|} \hline
} {
 \\ \hline
 \end{tabular}
 \end{center}
%\end{figure}
}
\newcommand{\keywords}[1]{\par\addvspace\baselineskip
\noindent\keywordname\enspace\ignorespaces#1}
\begin{document}

%\frontmatter          % for the preliminaries
%\tableofcontents
\mainmatter              % start of the contributions
\title{Consistency Analysis of Finite Difference Approximations to PDE Systems}
\titlerunning{Consistency Analysis of Finite Difference Approximations to PDE Systems}
\author{
Vladimir P. Gerdt}
\authorrunning{Vladimir P. Gerdt}
\institute{Laboratory of Information Technologies,\\ Joint Institute for Nuclear Research, 141980 Dubna, Russia\\ \mailsa
% e-mail: \email{gerdt@.jinr.ru}
}

\maketitle              % typeset the title of the contribution`
\tocauthor{Vladimir P. Gerdt (JINR, Dubna)}

\begin{abstract}
In the given paper we consider finite difference approximations to systems of polynomially-nonlinear partial differential equations whose coefficients are rational functions over rationals in the independent variables.
The notion of strong consistency which we introduced earlier for linear systems is extended to nonlinear ones. For orthogonal and uniform grids we describe an algorithmic procedure for verification of strong consistency based on computation of difference standard bases. The concepts and algorithmic methods of the present paper are illustrated by two finite difference approximations to the two-dimensional Navier-Stokes equations. One of these  approximations is strongly consistent and another is not.
\keywords{systems of partial differential equations, involution, Thomas decomposition, finite difference approximations, consistency, difference standard bases, Navier-Stokes equations, computer algebra}
\end{abstract}

\section{Introduction}

Along with the methods of finite volumes and finite elements, the finite difference method~\cite{Samarskii'01} is widely used for  numerical solving of partial differential equations (PDE). This method is based upon the application of a local Taylor expansion to replace a differential equation by the difference one~\cite{Str'04,Th1'98} defined on the chosen computational grid. The last equation forms finite difference approximation (FDA) to the given PDE, and together with discrete approximation of initial or/and boundary condition constitutes a finite difference scheme (FDS).

In theory, the most essential feature required of discretization is convergence of a solution of FDS to a solution of PDE as the grid spacings go to zero. However, except a very limited class of problems, convergence cannot be directly analyzed. Instead, it has been universally adopted that convergence is provided if FDA is consistent and stable. This adoption is due to the brilliant Lax-Richtmyer equivalence theorem~\cite{Str'04,Th1'98} proved first for linear scalar PDE equations and then extended to some nonlinear scalar equations~\cite{Ros'82}. The theorem states that a consistent FDA to a PDE with the well-posed initial value (Cauchy) problem converges if and only if it is stable. Consistency implies reduction of FDA to the original PDE when the grid spacings go to zero. It is obvious that consistency is necessary for convergence. As to stability, it provides boundedness of the error in the solution under small perturbation in the numerical data.

Thus, the consistency check and verification of stability are principal steps in qualitative analysis of FDA to PDE. Modern computer algebra methods, algorithms and software may provide a powerful tool for generating FDA~\cite{GBM'06} and for performing its consistency and stability analysis. Some recent computer algebra application to study stability and to generate FDA to linear PDE systems with constant coefficients are discussed in~\cite{ML'11}. In papers~\cite{GB'09,GR'10} some computer algebra and algorithmic issues related to the consistency analysis were considered. In particular, for orthogonal and uniform solution grids the notion of s-consistency (strong-consistency) was introduced in~\cite{GR'10} for FDA to a linear PDE system that strengthens the conventional notion of consistency and admits algorithmic verification.  In doing so, an s-consistent discretization not only approximates the differential equations in a given linear system but also preserves at the discrete level algebraic properties of the system. It follows that if the system has local conservation laws in the form of algebraic consequences of its equations, then the s-consistent discrete system will also have such conservation laws (cf.~\cite{Dor'01,Th2'99}).

In this paper we generalize the concept of s-consistency to polynomially-nonlinear PDE systems and extend the algorithmic ideas of paper~\cite{GR'10} to check s-consistency for such systems on orthogonal and uniform solution grids. In the linear case algorithmic verification of s-consistency is based on completion of the initial differential system to involution and on construction of a \Gr basis for the linear difference ideal generated by FDA. It is important to emphasize that involutivity of the linear differential system under consideration not only makes possible an algorithmic verification of s-consistency but is also necessary (cf.~\cite{Seiler'10}) to well-posedness of Cauchy problem for the system what, if one believes in the extension of Lax-Richtmyer equivalence theorem to PDE systems, can provide convergence for s-consistent and stable FDA.

However, a differential system may not admit involutive form. Generally, one can decompose such a system into a finitely many involutive subsystems by applying the Thomas decomposition method~\cite{Thomas'37-62}. The  decomposition is done fully algorithmically~\cite{BGLHR'10} with the use of constructive ideas by Janet~\cite{Janet'29} further developed and generalized in~\cite{G'05,GB'98}. Another obstacle for nonlinear FDA is that the relevant nonlinear difference \Gr basis~\cite{Levin'08} may be infinite. Since it is commonly supposed that \Gr basis is a finite object, its infinite difference analogue is called standard basis as well  as in differential algebra (cf.~\cite{Ollivier'90,Zobnin'05}).

This paper is organized as follows. Section 2 contains a short description of differential and difference systems of equations which are studied in the paper. The properties of differential Thomas decomposition that are used for the s-consistency check are considered in Section 3. In Section 4 we define difference standard bases and present an algorithm for their construction. The definition of s-consistency of FDA for uniform and orthogonal grids, which is a generalization of that in~\cite{GR'10} to nonlinear differential systems, is given in Section 5. Here we also formulate and prove the main theorem on the algorithmic characterization of s-consistency and propose  an algorithmic procedure for its verification. The concepts and methods of the paper are illustrated in Section 6 by two FDA derived in~\cite{GB'09} for the two-dimensional Navier-Stokes equations. Some concluding remarks are given in Section 7.

\section{Preliminaries}

In the given paper we consider PDE systems of the form
\begin{equation}
f_1=\cdots=f_p=0,\quad F:=\{f_1,\ldots,f_p\}\subset {\cal{R}}\,.  \label{pde}
\end{equation}

Here $f_i$ $(i=1,\ldots, p)$ are elements in the {\em differential polynomial ring}  ${\cR}:={\K}[u^{1},\ldots,u^{m}]$, that is, polynomials in the dependent variables $\mathbf{u}:=\{u^{1},\ldots,u^{m}\}$ ({\em differential indeterminates}) and their partial derivatives
which are the operator power products of the derivation operators $\{\delta_1,\ldots,\delta_n\}$ $(\delta_j=\partial_{x_j})$. We shall assume that coefficients of the polynomials are rational functions in the independent variables $\mathbf{x}:=\{x_1,\ldots,x_n\}$ whose coefficients are rational numbers,
i.e. ${\K}:={\Q}(\mathbf{x})$.

To approximate the differential system (\ref{pde}) by a difference system we
shall use an orthogonal and uniform computational grid (mesh) as the set of
points $(k_1h_1,\ldots,k_nh_n)$
in $\R^n$. Here $\mathbf{h}:=(h_1,\ldots,h_n)$ $(h_i>0)$ is the set of
mesh steps (grid spacings) and the integer-valued
vector $(k_1,\ldots,k_n)\in \Z^n$ numerates the grid
points. If the actual solution to the problem (\ref{pde}) is the vector-function
$\mathbf{u}(\mathbf{x})$, then its approximation in the grid nodes will be
given by the grid (vector) function
$\mathbf{u}_{k_1,\cdots,k_n}=\mathbf{u}(k_1h_1,\ldots,k_nh_n)$.

We shall assume that coefficients of the differential polynomials in $F$ do not vanish in the grid points. The coefficients on the grid as rational functions in $\{k_1h_1,\ldots,k_nh_n\}$ are elements of the difference field~\cite{Levin'08} with mutually commuting {\em differences} $\{\sigma_1,\ldots,\sigma_n\}$ acting on a  function $\phi(\mathbf{x})$ as the right-shift operators
\begin{equation}
\sigma_i\circ
\phi(x_1,\ldots,x_n)=\phi(x_1,\ldots,x_i+h_i,\ldots,x_n)\quad (\,h_i>0\,).
\label{rs-operators}
\end{equation}

The {\em monoid} ({\em free commutative semigroup}) generated by $\mathbf{\sigma}$ will be denoted by $\Theta$, i.e.
\[
\Theta:=\{\,\sigma_1^{i_1}\circ\cdots\circ\sigma_n^{i_n}\mid i_1,\ldots,i_n\in \N_{\geq 0}\,\}\,,\qquad (\,\forall \theta\in \Theta\,)\ [\,\theta \circ 1=1\,]\,,
\]
the field of rational functions in $\{k_1h_1,\ldots,k_nh_n\}$ by $\tilde{\K}$ and the ring of {\em difference polynomials} over $\tilde{\K}$ by $\tilde{\cR}$. The elements in $\tilde{\cR}$ are polynomials in the dependent variables ({\em difference indeterminates}) $u^{\alpha}$ $(\alpha=1,\ldots,m)$ defined on the grid and in their shifted values $\sigma_1^{i_1}\circ\cdots\circ \sigma_n^{i_n}\circ u^\alpha$  $(i_j\in \N_{\geq 0})$. The coefficients of polynomials are taken from $\tilde{\K}$.

The standard technique to obtain FDA to (\ref{pde}) is replacement of the derivatives occurring in (\ref{pde}) by finite differences and application of appropriate power product of the right-shift operators (\ref{rs-operators}) to remove negative
shifts in indices which may come out of
expressions like
\[
\partial_j\circ u^{(i)}=\frac{u^{(i)}_{k_1,\ldots,k_j+1,\ldots,k_n}-u^{(i)}_{k_1,\ldots,k_j-1,
\ldots,k_n}}{2h_j}+O(h_j^2)\,.
\]
In \cite{GBM'06} we suggested another approach to generation of FDA which is
based on the finite volume method and on {\em difference elimination}. As it was shown for the classical Falkowich-Karman equation in gas dynamics, this method may derive a FDA which reveals better numerical behavior then those obtained by the standard technique.
In the sequel we shall consider FDA to the PDE system (\ref{pde}) as a finite set of difference polynomials
\begin{equation}
\tilde{f}_1=\cdots=\tilde{f}_q=0,\quad \tilde{F}:=\{\tilde{f}_1,\ldots,\tilde{f}_q\}\subset \tilde{\cR}\,, \label{fda}
\end{equation}
where $q$ need not be equal to $p$.

We shall say that a differential (resp. difference) polynomial $f\in {\cR}$ (resp. $\tilde{f}\in \tilde{\cR}$) is {\em differential-algebraic} (resp. {\em difference-algebraic}) {\em consequence} of (\ref{pde}) (resp. (\ref{fda})) if $f$ (resp. $\tilde{f}$) vanishes on the common solutions of (\ref{pde}) (resp. (\ref{fda})).

\section{Differential Thomas Decomposition}

\begin{definition} Let $S^{=}$ and $S^{\neq}$ be finite sets of differential polynomials
such that $S^{=}\neq \emptyset$ and contains equations $(\forall s\in S^{=})\ [s=0]$ whereas $S^{\neq}$ contains inequations $(\forall s\in S^{\neq})\ [s\neq 0]$.
Then the pair $\left(S^{=},S^{\neq}\right)$ of sets $S^{=}$ and $S^{\neq}$ is called {\em differential system}.
\end{definition}

Let $\mathfrak{Sol}\,(S^{=}/S^{\neq})$ denote the solution set of system $\left(S^{=},S^{\neq}\right)$, i.e. the set of common solutions of differential equations $\{\,s=0\mid s\in S^{=}\}$ that do not annihilate elements $s\in S^{\neq}$.

\begin{theorem}{\em\cite{Thomas'37-62}} Any differential system $\left(S^{=},S^{\neq}\right)$ is decomposable into a finite set of involutive subsystems $\left(S^{=}_i,S^{\neq}_i\right)$ with disjoint set of solutions
\begin{equation}
(S^{=}/S^{\neq})\Longrightarrow \underset{i} \bigcup \ (S^{=}_i/S^{\neq}_i)\,,\quad  \mathfrak{Sol}\,(S^{=}/S^{\neq})=\ \underset{i}\biguplus \ \mathfrak{Sol}\,(S^{=}_i/S^{\neq}_i)\,.
\label{decomposition}
\end{equation}
\label{DTD}
\end{theorem}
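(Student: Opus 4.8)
\medskip
\noindent\emph{Proof plan.}\ The theorem is classical and the natural argument is constructive: I would exhibit the \emph{Thomas decomposition algorithm} that, given $(S^{=},S^{\neq})$, outputs the right-hand side of~(\ref{decomposition}). First I would fix a \emph{ranking} on the derivatives of $u^1,\dots,u^m$, i.e.\ a well-founded total order compatible with the action of $\delta_1,\dots,\delta_n$. Relative to it, each differential polynomial $s$ of positive order acquires a \emph{leader} $\ell_s$ (its highest derivative), an \emph{initial} (the leading coefficient of $s$ regarded as a polynomial in $\ell_s$) and a \emph{separant} $\partial s/\partial\ell_s$. The decomposition is then produced as the set of leaves of a finite binary tree of differential systems rooted at $(S^{=},S^{\neq})$, each internal edge recording one elementary transformation.

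Two transformations are used. The \emph{algebraic} one (Thomas): if two members of $S^{=}\cup S^{\neq}$ share a leader, eliminate it by a pseudo-remainder computation; every time this forces division by an initial or a separant $c$, split the node into the branch in which $c$ is moved into $S^{\neq}$ and the reduction is carried out, and the branch in which the equation $c=0$ is adjoined to $S^{=}$. Repeating this on each branch yields a triangular set whose equations have pairwise distinct leaders and all of whose initials and separants sit among the inequations. The \emph{differential} one (Janet completion, following~\cite{Janet'29,GB'98,G'05}): for each pair of equations form the appropriate cross-derivative — the differential analogue of an $S$-polynomial dictated by the Janet multipliers — reduce it modulo the current triangular set, and, if the result is nonzero, append it as a new equation, again splitting on any separant that must be inverted. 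A node is declared a leaf exactly when the system it carries is \emph{simple}: auto-reduced, with initials and separants among the inequations, and \emph{passive}, meaning that every integrability condition reduces to zero. By Janet's theory a simple system is involutive, which establishes the involutivity claim for each $(S^{=}_i,S^{\neq}_i)$.

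Disjointness — the $\biguplus$ in~(\ref{decomposition}) — would come essentially for free: every split partitions $\mathfrak{Sol}$ along the mutually exclusive cases $c=0$ and $c\neq 0$, no solution of a child node is lost and none is invented, so the solution set at any node is the disjoint union of those at its children; induction from the leaves back to the root gives $\mathfrak{Sol}\,(S^{=}/S^{\neq})=\biguplus_i\mathfrak{Sol}\,(S^{=}_i/S^{\neq}_i)$, so in particular the decomposition exhausts all solutions.

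The hard part will be \emph{termination}: proving that the tree is finite. I expect this to need two interlocking Noetherian arguments. Along any single branch the multiset of leaders, ordered by the ranking together with the exponents with which the leaders occur, is well founded, so a Dickson / Ritt--Raudenbush-type argument bounds the chain of reductions; and since each split is binary and each adjoined equation strictly decreases this well-founded datum, König's lemma forbids an infinite path, whence finiteness of the whole tree. Making this rigorous — together with the bookkeeping that detects and discards inconsistent branches (those in which an inequation polynomial is forced to vanish) so that each surviving leaf really is a consistent involutive system — is precisely the content of the fully algorithmic treatment in~\cite{BGLHR'10}, which completes the proof; the original construction is due to~\cite{Thomas'37-62}.
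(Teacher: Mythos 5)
The paper offers no proof of this theorem: it is quoted from \cite{Thomas'37-62}, with the fully algorithmic treatment delegated to \cite{BGLHR'10}. Your outline reproduces the standard constructive argument from those sources --- fix a ranking, work with leaders, initials and separants, split binarily on the vanishing of each initial or separant that must be inverted, complete to passivity by Janet-style cross-derivatives, and observe that every split partitions the solution set of a node into the solution sets of its two children, so that induction over the tree yields the disjoint union $\mathfrak{Sol}\,(S^{=}/S^{\neq})=\biguplus_i \mathfrak{Sol}\,(S^{=}_i/S^{\neq}_i)$. That part of your plan is sound and is exactly what the cited references do.

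The one place where the plan, taken literally, would fail is termination. You assert that ``each adjoined equation strictly decreases'' the well-founded datum given by the multiset of leaders. This is false as stated: a nonzero reduced integrability condition adjoined during Janet completion can have a leader strictly higher than any leader already present in the system, so the multiset can go \emph{up} before subsequent auto-reduction brings it back down; a naive descent argument on leaders therefore does not close. The actual termination proof in \cite{BGLHR'10} (and, in essence, already in Thomas) is more delicate: one combines Dickson's lemma on the set of leading derivatives with a carefully chosen well-founded order on entire systems (comparing sorted tuples of leaders together with the degrees in those leaders), and shows that every elementary step of the algorithm either terminates a branch or strictly decreases that compound measure; only then does finite branching plus K\"onig's lemma give finiteness of the tree. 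Since you explicitly defer this step to the literature, the sketch is acceptable as a citation-level argument, but the specific invariant you propose would not survive being made rigorous.
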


The structure of involutive subsystems in decomposition of a given system depends on the choice of {\em ranking} defined as follows. Consider the monoid of derivation operators $\Delta:=\{\,\delta_1^{i_1}\circ\cdots \circ\delta_n^{i_n}\mid i_1,\ldots,i_n\in \N_{\geq 0}\,\}$.

\begin{definition}
A total (linear) ordering $\succ$ on the set of partial derivatives $\{\delta\circ u^\alpha\mid \delta\in \Delta,\,\alpha=1,\ldots,\rho\}$ is {\em ranking} if for all $i,\alpha,\beta,\delta,\bar{\delta}$
\[
\delta_{i} \circ\delta u^\alpha \succ \delta \circ u^\alpha\,, \quad
\delta\circ u^\alpha \succ \bar{\delta}\circ u^\beta\quad
\Longleftrightarrow \quad \delta_i \circ \delta\circ  u^\alpha
   \succ \delta_i\circ \bar{\delta}\circ u^\beta\,.
\]
If\ \ $(\exists \gamma)\ [\delta\circ u^\gamma\succ \bar{\delta}\circ u^\gamma] \Longrightarrow (\,\forall\,\alpha,\beta\,)\ [\,\delta\circ u^{\alpha}\succ \bar{\delta}\circ u^{\beta}\,]$, then $\succ$ is {\em orderly}. If $u^\alpha\succ u^\beta\Longrightarrow (\,\forall\,\delta,\bar{\delta}\,)\ [\,\delta\circ u^{\alpha}\succ \bar{\delta}\circ u^{\beta}\,]$, then $\succ$ is {\em elimination}.
\label{dift-ranking}
\end{definition}

The Thomas decomposition into Janet involutive~\cite{Janet'29} subsystems is done fully algorithmically and have been implemented as a Maple package~\cite{BGLHR'10}. Given decomposition (\ref{decomposition}), one can algorithmically verify whether a differential equation $f=0$ $(f\in {\cal{R}})$ is a differential-algebraic consequence of the system $(S^{=},S^{\neq})$
\begin{equation}
(\, \forall a\in \mathfrak{Sol}\,(S^{=}/S^{\neq})\ \ [\,f(a)=0]\Longleftrightarrow (\,\forall\, i\,) \ [\,\text{dprem}_{\cJ}(f,S^{=}_i)=0\,]\,. \label{diff_conseq}
\end{equation}
Here $\text{dprem}_{\cJ}(f,P)$ denotes {\em differential Janet pseudo-reminder} of $f$ modulo $P$. The underlying Janet pseudo-division algorithm is described in~\cite{BGLHR'10} and implemented in the package.

\begin{remark}
For the case $S^{\neq}=\emptyset$ condition (\ref{diff_conseq}) verifies  $f\in \llbracket S^{=}\rrbracket\subset {\cal{R}}$, where $\llbracket F\rrbracket$ denotes the {\em radical} of differential ideal generated by the set $F$. Thereby, the Thomas decomposition of $(F,\emptyset)$ provides a {\em characteristic decomposition} of $\llbracket F\rrbracket$ (see~\cite{BGLHR'10,Hubert'01} for more details).
\label{rem:radical}
\end{remark}

\begin{example}
We illustrate the Thomas decomposition by the the example taken from \cite{G'08}. Consider differential system
\[
(\{(u_y+v)u_x+4v\,u_y-2v^2,\,(u_y+2v)u_x+5v\,u_y-2v^2\},\{\})
\]
with two quadratically-nonlinear first-order PDE with two dependent and two independent variables. Its Thomas decomposition for the ranking satisfying $u_x \succ u_y \succ v_x \succ v_y \succ u\succ v$ is given by
$$
  \left(
  \begin{array}{l}
  (u_y+v)u_x+4v\,u_y-2v^{\,2}\\
  u_y^{\,2}-3u_y+2v^{\,2}\\
  {v_x+v_y}
  \end{array},\,
   v \right) \ \bigcup \ \left(
  \begin{array}{l}
   u_x \\
   v
  \end{array},\, u_y \right) \ \bigcup \
  \left(
  \begin{array}{l}
   u_y\\
   v
   \end{array}\,, \emptyset \right)\,.
$$
\end{example}

For a differential system with linear PDEs and the empty set of inequations the decomposition algorithm performs  completion of the system to involution and returns the Janet basis form~\cite{Maple-Janet'03,G'99} of the input system.

\section{Difference Standard Bases}
For the shifted dependent variables {\em ranking} is defined in perfect analogy to Definition~\ref{dift-ranking} of ranking for partial derivatives.
\begin{definition}{\em\cite{Levin'08}}
A total ordering $\prec$ on $\{\,\theta\circ u^{\alpha}\mid \theta\in \Theta,\ 1\leq\alpha\leq m\,\}$ is {\em ranking} if for all  $\sigma_i,\theta,\theta_1,\theta_2,\alpha,\beta$
\[
(i)\,\sigma_i\,\circ\, \theta\,\circ u^\alpha \succ {\theta \circ u^\alpha}\,,\ \
(ii)\, {\theta_1 \circ u^\alpha} \succ {\theta_2 \circ u^\beta}\,
\Longleftrightarrow \, {
\theta}\circ {\theta_1\circ u^\alpha} \succ {\theta}\circ{\theta_2\circ u^\beta}\,.
\]
\label{def-diffc-ranking}
\end{definition}
%To introduce the concept of difference standard bases which play the same role in difference algebra as \Gr bases in %commutative algebra~\cite{BW'93,CLO'07} we need in the following definition.
\begin{definition}
A total ordering $\succ$ on the set ${\M}$ of {\em difference monomials}
\[
{\M}:=\{\,(\theta_1\circ u^{1})^{i_1}\cdots (\theta_m\circ u^{m})^{i_m}\mid \theta_j\in \Theta,\ i_j\in \N_{\geq 0},\ 1\leq j\leq m\,\}
\]
is {\em admissible} if it extends a ranking and satisfies
\[
(\forall\, t\in {\M}\setminus \{1\})\ [t\succ 1]\ \wedge\
(\,\forall\, \theta\in \Theta)\ (\,\forall\, t,v,w\in {\M}\,)\ [\,v\succ w\Longleftrightarrow t\cdot\theta\circ v\succ  t\cdot \theta\circ w\,].
\]
\label{mon-order}
\end{definition}

\begin{remark}
Similar to that in Definition \ref{dift-ranking} one can define {\em orderly} and {\em elimination} difference rankings. As an example of admissible monomial ordering we indicate the {\em lexicographical monomial ordering}  compatible with a ranking.
\label{lex-ord}
\end{remark}

Given an admissible ordering $\succ$, every difference polynomial $\tilde{f}$ has the {\em leading monomial} $\lm(\tilde{f})\in {\M}$ with the {\em leading coefficient} $\lc(\tilde{f})$. In what follows every difference monomial is to be {\em normalized (i.e. monic)} by division of the monomial by its leading coefficient. This  provides  $(\,\forall \tilde{f}\in \tilde{\cR}\,)\ [\,\lc(\tilde{f})=1\,]$.

Now we consider the notions of difference ideal~\cite{Levin'08} and its standard basis. The last notion is given here in analogy to that in differential algebra~\cite{Ollivier'90}.

\begin{definition}{\em\cite{Levin'08}} A set ${\cI}\subset {\tilde{\cR}}$ is {\em difference polynomial ideal}  or {\em $\sigma$-ideal} if
\[
(\,\forall\, a,b\in {\cI}\,)\ (\,\forall\, c\in \tilde{\cR}\,),\quad  (\,\forall\, \theta \in \Theta\,)\ [\,
a+b\in {\cI},\ a\cdot c\in {\cI},\ \theta\circ a\in {\cI}\,].
\]
If $\tilde{F}\subset \tilde{\cR}$, then the smallest $\sigma$-ideal containing $\tilde{F}$ is said to be generated by $\tilde{F}$ and denoted by $[\tilde{F}]$.
\label{diffc-ideal}
\end{definition}

If for $v,w\in {\M}$ the equality $w=t\cdot \theta\circ v$ holds with $\theta\in \Theta$ and $t\in {\M}$ we shall say that $v$ {\em divides} $w$ and write $v\mid w$. It is easy to see that this divisibility relation yields a {\em partial order}.

\begin{definition} Given a $\sigma$-ideal ${\cI}$ and an admissible monomial ordering $\succ$, a  subset $\tilde{G}\subset {\cI}$ is its {\em (difference) standard basis} if $[\tilde{G}]={\cI}$ and
\begin{equation}
(\,\forall\, \tilde{f}\in {\cI}\,) (\,\exists\, \tilde{g}\in \tilde{G}\,)\ \ [\,\lm(\tilde{g})\mid \lm(\tilde{f})\,]\,.
\label{SB}
\end{equation}
If the standard basis is finite it is called {\em \Gr basis}.
\label{def_SB}
\end{definition}

\begin{definition}
A polynomial $\tilde{p}\in \tilde{\cR}$ is said to be {\em head reducible modulo $\tilde{q}\in \tilde{\cR}$ to $\tilde{r}$} if $\tilde{r}=\tilde{p}-m\cdot\theta\circ \tilde{q}$ and $m\in {\cM}$, $\theta\in \Theta$ are such that $\lm(\tilde{p})=m\cdot\theta\circ \lm(\tilde{q})$. In this case transformation from
$\tilde{p}$ to $\tilde{r}$ is {\em elementary reduction} and denoted by ${\tilde{p}}\xrightarrow[\tilde{q}]{} \tilde{r}$. Given a set $\tilde{F}\subset \tilde{\cR}$, $\tilde{p}$ {\em is head reducible modulo $\tilde{F}$} $($denotation: ${\tilde{p}}\xrightarrow[\tilde{F}]{})$  if there is $\tilde{f}\in \tilde{F}$ such that $\tilde{p}$ is head reducible modulo $\tilde{f}$. A polynomial $\tilde{p}$ {\em is head reducible to $\tilde{r}$ modulo $\tilde{F}$} if there is a chain of elementary reductions
\begin{equation}
\tilde{p}\xrightarrow[\tilde{F}]{}\tilde{p}_1\xrightarrow[\tilde{F}]{} \tilde{p}_2\xrightarrow[\tilde{F}]{}\cdots \xrightarrow[\tilde{F}]{}\tilde{r}\,.
\label{red_chain}
\end{equation}
Similarly, one can define {\em tail reduction}. If $\tilde{r}$ in (\ref{red_chain}) and each of its monomials are not reducible modulo $\tilde{F}$, then we shall say that {\em $\tilde{r}$ is in the normal form modulo $\tilde{F}$} and write $\tilde{r}=\mathrm{NF}(\tilde{p},\tilde{F})$. A polynomial set $\tilde{F}$ with more then one element is {\em interreduced} if
\begin{equation}
(\,\forall \tilde{f}\in \tilde{F}\,)\ [\,\tilde{f}=\mathrm{NF}(\tilde{f},\tilde{F}\setminus \{\tilde{f}\})\,]\,. \label{interreduce}
\end{equation}
\label{reduction}
\end{definition}
Admissibility of $\succ$, as in commutative algebra,  provides termination of chain (\ref{red_chain}) for any $\tilde{p}$ and $\tilde{F}$. In doing so, $\mathrm{NF}(\tilde{p},\tilde{F})$ can be computed by the difference version of a multivariate polynomial division algorithm~\cite{BW'93,CLO'07}. If $\tilde{G}$ is a standard basis of $[\tilde{G}]$, then from Definitions \ref{def_SB} and \ref{reduction} it follows
\[
   \tilde{f}\in [\tilde{G}] \Longleftrightarrow \mathrm{NF}(\tilde{f},\tilde{G})=0\,.
\]
Thus, if an ideal has a finite standard (Gr\"{o}bner) basis, then its construction solves the ideal membership problem as well as in commutative~\cite{BW'93,CLO'07} and differential~\cite{Ollivier'90,Zobnin'05} algebra. The algorithmic characterization of standard bases, and their construction in difference polynomial rings is done in terms of difference $S$-polynomials.

\begin{definition}
Given an admissible ordering, and monic difference polynomials $\tilde{p}$ and $\tilde{q}$, the polynomial $S(\tilde{p},\tilde{q}):=m_1\cdot \theta_1\circ \tilde{p}-m_2\cdot \theta_2\circ \tilde{q}$
is called {\em $S$-polynomial} associated to $\tilde{p}$ and $\tilde{q}$ (for $\tilde{p}=\tilde{q}$ we shall say that {\em $S$-polynomial} is associated with $\tilde{p})$ if
$ m_1\cdot \theta_1\circ  \lm(\tilde{p})=m_2\cdot \theta_2\circ \lm(\tilde{q})$
with co-prime $m_1\cdot \theta_1$ and $m_2\cdot \theta_2$.
\label{S-polynomial}
\end{definition}

\begin{theorem}
Given an ideal ${\cI}\subset \tilde{\cR}$ and an admissible ordering $\succ$, a set of polynomials $\tilde{G}\subset {\cI}$ is a standard basis of ${\cI}$ if and only if $\mathrm{NF}(S(\tilde{p},\tilde{q}),\tilde{G})=0$ for
all $S$-polynomials, associated with polynomials in $\tilde{G}$.
\label{B-criterion}
\end{theorem}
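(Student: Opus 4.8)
The plan is to adapt the classical Buchberger criterion to the difference setting, following the same two-directional structure used in commutative and differential algebra. The forward direction is immediate: if $\tilde{G}$ is a standard basis of $\cI$ and $\tilde{p},\tilde{q}\in\tilde{G}$, then $S(\tilde{p},\tilde{q})\in\cI$ since $\cI$ is a $\sigma$-ideal (it is closed under the action of $\Theta$, under multiplication by elements of $\tilde{\cR}$, and under addition), and by the membership characterization $\tilde{f}\in[\tilde{G}]\Longleftrightarrow\mathrm{NF}(\tilde{f},\tilde{G})=0$ noted right after Definition~\ref{reduction}, we get $\mathrm{NF}(S(\tilde{p},\tilde{q}),\tilde{G})=0$ at once.

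For the converse, assume $\mathrm{NF}(S(\tilde{p},\tilde{q}),\tilde{G})=0$ for every $S$-polynomial associated with elements of $\tilde{G}$. I must show that every $\tilde{f}\in\cI$ has its leading monomial divisible, in the sense of Definition~\ref{def_SB}, by $\lm(\tilde{g})$ for some $\tilde{g}\in\tilde{G}$. Take $\tilde{f}\in\cI=[\tilde{G}]$ and write it as a finite $\sigma$-ideal combination $\tilde{f}=\sum_{k} c_k\cdot(\theta_k\circ\tilde{g}_k)$ with $c_k\in\tilde{\cR}$, $\theta_k\in\Theta$, $\tilde{g}_k\in\tilde{G}$; expanding the $c_k$ into difference monomials, $\tilde{f}=\sum_{\ell} m_\ell\cdot(\vartheta_\ell\circ\tilde{g}_\ell)$ with $m_\ell\in\cM$, $\vartheta_\ell\in\Theta$. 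Among all such representations choose one minimizing the largest monomial $M:=\max_\ell\lm(m_\ell\cdot\vartheta_\ell\circ\tilde{g}_\ell)$ with respect to $\succ$. If $M=\lm(\tilde{f})$, then $\lm(\tilde{f})$ is divisible by some $\lm(\tilde{g}_\ell)$ and we are done. Otherwise $M\succ\lm(\tilde{f})$, so the terms attaining $M$ must cancel; collecting those terms and replacing each consecutive cancelling pair by the corresponding $S$-polynomial — here one uses that $\succ$ is admissible, i.e. $v\succ w\Leftrightarrow t\cdot\theta\circ v\succ t\cdot\theta\circ w$ and $t\succ1$ for $t\neq1$, exactly as in Definition~\ref{mon-order} — rewrites the partial sum $\sum m_\ell\cdot\vartheta_\ell\circ\tilde{g}_\ell$ (over the cancelling indices) as a combination of $S$-polynomials of the $\tilde{g}_\ell$, each multiplied by a monomial and a shift. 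By hypothesis each such $S$-polynomial reduces to $0$ modulo $\tilde{G}$, i.e. equals a combination $\sum m'\cdot(\vartheta'\circ\tilde{g}')$ with all $\lm(m'\cdot\vartheta'\circ\tilde{g}')\prec\lm(S)\preceq M$; substituting these back yields a new representation of $\tilde{f}$ with strictly smaller $M$, contradicting minimality. Hence $M=\lm(\tilde{f})$ and (\ref{SB}) holds, so $\tilde{G}$ is a standard basis.

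The one genuinely delicate point, and the main obstacle, is the passage from "the highest terms cancel" to "the cancelling part is a monomial-shift combination of $S$-polynomials with strictly lower leading term." In the polynomial case this is the standard telescoping-sum lemma; here it must be checked that the cofactors $m_\ell\cdot\vartheta_\ell$ of two summands whose leading monomials coincide are related to the co-prime cofactors $m_i\cdot\theta_i$ of the defining $S$-polynomial (Definition~\ref{S-polynomial}) by a common monomial-times-shift factor — this is where the monoid structure of $\Theta$ and the fact that $\mid$ is a genuine partial order (noted just before Definition~\ref{def_SB}) are used, and one must verify that the relevant least common "multiple" $\lcm$ of two monomial-shift operators behaves as expected. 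A secondary subtlety is that, unlike Buchberger's algorithm in a Noetherian polynomial ring, $\tilde{G}$ need not be finite and the standard basis may be infinite; but since the argument above only manipulates the finitely many summands of a single fixed representation of $\tilde{f}$, finiteness of $\tilde{G}$ is never invoked — termination of the individual reduction chains is guaranteed by admissibility of $\succ$ as already remarked after Definition~\ref{reduction}, and the minimality-of-$M$ descent is a well-ordering argument on $\succ$, not on $\tilde{G}$.
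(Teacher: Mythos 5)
Your proposal is correct and follows essentially the same route as the paper, whose own proof consists only of the remark that the statement ``follows from Definitions~\ref{def_SB}, \ref{reduction} and \ref{S-polynomial} in line with the standard proof'' of Buchberger's criterion in commutative algebra and of its differential analogue due to Ollivier. You in fact supply strictly more detail than the paper does, and you correctly isolate the one point the citation glosses over --- that a cancellation of equal leading terms must be rewritten via an $S$-polynomial with co-prime monomial-shift cofactors, which hinges on the behaviour of least common multiples in the (non-commutative) monoid of operators $m\cdot\theta$.
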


\begin{proof}
It follows from Definitions \ref{def_SB}, \ref{reduction} and \ref{S-polynomial} in line with the standard proof of the analogous theorem for \Gr bases in commutative algebra~\cite{BW'93,CLO'07} and with the proof of similar theorem for standard bases in differential algebra~\cite{Ollivier'90}.  $\Box$
\end{proof}

Let ${\cI}=[\tilde{F}]$ be a $\sigma$-ideal generated by a finite set $\tilde{F}\subset \tilde{\cal{R}}$ of difference polynomials. Then for a fixed admissible monomial ordering the below algorithm \textsl{\bfseries{StandardBasis}}, if it terminates, returns a standard basis $\tilde{G}$ of ${\cI}$.  The subalgorithm \textsl{\bfseries{Interreduce}} invoked in line 11 performs mutual interreduction of the elements in $\tilde{H}$ and returns a set satisfying (\ref{interreduce}).

Algorithm \textsl{\bfseries{StandardBasis}} is a difference analogue of the simplest version of Buchberger's algorithm (cf.~\cite{BW'93,CLO'07,Ollivier'90}). Its correctness is provided by Theorem \ref{B-criterion}. The algorithm always terminates when the input polynomials are linear. If this is not the case, the algorithm may not terminate.  This means that the {\bf do while}-loop (lines 2--10) may be infinite as in the differential case~\cite{Ollivier'90,Zobnin'05}. One can improve the algorithm by taking into account Buchberger's criteria to avoid some useless zero reductions of line 5. The difference  criteria are similar to the differential ones~\cite{Ollivier'90}.

\begin{example}
Consider a simple example of the principal ideal generated by polynomial $\tilde{g}_1:=u(x)\cdot u(x+2)-x\cdot u(x+1)$ in the {\em ordinary} difference ring with the only shift operator (difference) $\sigma\circ u(x)=u(x+1)$, the independent variable (indeterminate) $u$ and the dependent variable $x$. Let us fix monomial ordering as the pure lexicographic one with $u(x)\prec u(x+1)\prec \cdots$. Obviously, it is admissible. Then a nontrivial (i.e. having nonzero normal form) $S$-polynomial $s_1$ associated with $\tilde{g}_1$ and its normal form $\tilde{g}_2$ modulo $\{\tilde{g}_1\}$ are given by
\begin{eqnarray*}
&& s_1:=u(x+4)\cdot \tilde{g}_1-u(x)\cdot \sigma^2 \circ \tilde{g}_1\,,\\
&& \tilde{g}_2:=\mathrm{NF}(s_1,\{\tilde{g}_1\})=u(x+1)\cdot u(x+4)-\frac{x+2}{x}\cdot u(x)\cdot u(x+3)\,.
\end{eqnarray*}
The second nontrivial $S$-polynomial $s_2$ associated with $\tilde{g}_1,\tilde{g}_2$ and its normal form $\tilde{g}_3$ modulo $\{\tilde{g}_1,\tilde{g}_2\}$ read
\begin{eqnarray*}
&& s_2:=u(x+4)\cdot \sigma \circ \tilde{g}_1 - u(x+3)\cdot \tilde{g}_2\,,\\
&& \tilde{g}_3:=\mathrm{NF}(s_2,\{\tilde{g}_1,\tilde{g}_2\})=u(x)\cdot u(x+3)^2-x\cdot (x+1)\cdot u(x+3) \,.
\end{eqnarray*}
One more nontrivial $S$-polynomial $s_3$ associated with $\tilde{g}_2,\tilde{g}_3$ and its normal
form $\tilde{g}_4$ modulo $\{\tilde{g}_1,\tilde{g}_2,\tilde{g}_3\}$ are
\begin{eqnarray*}
&& s_3:=\sigma \circ \cdot \tilde{g}_3 - u(x+4)\cdot \tilde{g}_2\,,\\
&& \tilde{g}_4:=\mathrm{NF}(s_3,\{\tilde{g}_1,\tilde{g}_2,\tilde{g}_3\})=u(x)\cdot u(x+3)\cdot u(x+4)-x\cdot (x+1)\cdot u(x+4)\,.
\end{eqnarray*}
The last nontrivial $S$-polynomial $s_4$ associated with $\tilde{g}_3,\tilde{g}_4$ and its normal
form $\tilde{g}_5$ modulo $\{\tilde{g}_1,\tilde{g}_2,\tilde{g}_3,\tilde{g}_4\}$ are
\begin{eqnarray*}
&& s_4:=u(x+5)\cdot \tilde{g}_3 - \sigma\circ \tilde{g}_4,,\\
&& \tilde{g}_5:=\mathrm{NF}(s_4,\{\tilde{g}_1,\tilde{g}_2,\tilde{g}_3,\tilde{g}_4\})=u(x+5)- \frac{x+3}{x\cdot(x+1)}u(x)\cdot u(x+4)\,.
\end{eqnarray*}
Now all $S$-polynomials associated with elements in $\tilde{G}:=\{\tilde{g}_1,\tilde{g}_2,\tilde{g}_3,\tilde{g}_4,\tilde{g}_5\}$ are reduced to zero modulo $\tilde{G}$, and $\tilde{G}$ is an interreduced standard basis of $[\tilde{g}_1]$.
\end{example}

\begin{algorithm}{\textsl{\bfseries{StandardBasis}}\,($\tilde{F},\succ$)\label{StandardBasis}}
\begin{algorithmic}[1]
\INPUT $\tilde{F}\in \tilde{\cal{R}}\setminus \{0\}$, a finite set of nonzero polynomials;\\ $\succ$, a monomial ordering \\
\OUTPUT $G$, an interreduced standard basis of $[F]$
\STATE $\tilde{G}:=\tilde{F}$
\DOWHILE
  \STATE  $\tilde{H}:=\tilde{G}$
  \FORALL{$S$-polynomials $\tilde{s}$ associated with elements in $\tilde{H}$}
    \STATE $\tilde{g}:=\mathrm{NF}(\tilde{s},\tilde{H})$
    \IF{$\tilde{g}\neq 0$}
      \STATE $\tilde{G}:=\tilde{G}\cup \{\tilde{g}\}$
    \ENDIF
  \ENDFOR
\ENDDO{$\tilde{G}\neq \tilde{H}$}
\STATE $\tilde{G}:=$\textsl{\bfseries{Interreduce}}\,($\tilde{G}$)
\RETURN $\tilde{G}$
\end{algorithmic}
\end{algorithm}

\section{Consistency of Finite Difference Approximations}

For simplicity, throughout this section we shall consider orthogonal and uniform grids with equisized mesh steps $h_1=\cdots=h_n=h$.

\begin{definition}{\em\cite{GR'10}} We shall say that a {\em difference equation $\tilde{f}(\mathbf{u})=0$ implies the differential equation $f(\mathbf{u})=0$} and write
$\tilde{f}\rhd f$ when the Taylor expansion about a grid point yields
\[
\tilde{f}(\mathbf{u})\xrightarrow[h\rightarrow 0]{} f(\mathbf{u})h^k + O(h^{k+1}),\ k\in \Z_{\geq 0}\,.
\]
\end{definition}

\begin{definition}{\em\cite{GR'10}}
Given a PDE system (\ref{pde}) and its difference approximation (\ref{fda}), we shall say that (\ref{fda}) is {\em weakly consistent} or {\em w-consistent} with (\ref{pde}) if
\[
   (\,\forall \tilde{f}\in \tilde{F}\,)\ (\,\exists f\in F\,)\ [\,\tilde{f}\rhd f\,]\,.
\]
\label{def-econs}
\end{definition}
In paper~\cite{GR'10} we showed that already for linear PDE systems such definition of consistency, which has been universally accepted in the literature, is not satisfactory in view of inheritance of properties of differential systems by their discretization. Instead, we introduced another concept of consistency for linear FDA which is extended to nonlinear systems of PDE as follows.

\begin{definition}{\em\cite{Levin'08}} A {\em perfect difference ideal} generated by set $\tilde{F}\in \tilde{\cal{R}}$ and denoted by $\llbracket \tilde{F}\rrbracket$ is the smallest difference ideal containing $\tilde{F}$ and such that for any $\tilde{f}\in {\cal{R}}$, $\theta_1,\ldots,\theta_r\in \Theta$ and $k_1,\ldots,k_r \in \N_{\geq 0}$
\[
(\theta_1\circ \tilde{f})^{k_1}\cdots (\theta_r\circ \tilde{f})^{k_r}\in \llbracket \tilde{F}\rrbracket \Longrightarrow \tilde{f}\in \llbracket \tilde{F}\rrbracket \,.
\]
\end{definition}

It is clear that $[\tilde{F}]\subseteq \llbracket \tilde{F}\rrbracket$. In difference algebra perfect ideals play the same role as radical ideals in commutative~\cite{CLO'07} and differential algebra~\cite{Hubert'01}, for example, in {\em Nullstellensatz}~\cite{Trushin'09}.  By this reason we shall consider the perfect ideal $\llbracket \tilde{F}\rrbracket$ generated by the difference polynomials
in FDA (\ref{fda}) as the set of its difference-algebraic consequences. Respectively, the set of differential-algebraic consequences of a PDE system is the radical differential ideal generated by the set $F$ in (\ref{pde}) (see  Remark~\ref{rem:radical}).

\begin{definition}
An FDA (\ref{fda}) to a PDE system (\ref{pde}) is {\em strongly consistent} or {\em s-consistent} if
\begin{equation}
(\,\forall \tilde{f}\in \llbracket \tilde{F} \rrbracket\,)\  (\,\exists
f\in \llbracket F \rrbracket \,)\ [\,\tilde{f}\rhd f\,]\,. \label{s-cond}
\end{equation}
\label{def-scon}
\end{definition}

The algorithm {\textsl{\bfseries{ConsistencyCheck}} presented below verifies s-consistency of  FDA to PDE systems. Its correction is provided by property (\ref{diff_conseq}) of the differential Thomas decomposition applied in lines 13--16 of the algorithm and by Theorem \ref{th:s-cons}. This theorem generalizes to nonlinear systems the theorem formulated and proved in~\cite{GR'10} for linear  systems.

\begin{theorem}
A difference approximation (\ref{fda}) to a differential system (\ref{pde}) is s-consistent if and only if a reduced standard basis $\tilde{G}\subset \tilde{\cR}$ of the difference
ideal $[\tilde{F}]$ satisfies
\begin{equation}
(\,\forall \tilde{g}\in \tilde{G}\,)\ (\,\exists g\in \llbracket F \rrbracket\,)\  [\,\tilde{g}\rhd g\,]\,. \label{cons-gb}
\end{equation}
\label{th:s-cons}
\end{theorem}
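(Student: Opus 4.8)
The plan is to establish both implications by relating the defining condition of s-consistency, namely~(\ref{s-cond}) over the whole perfect ideal $\llbracket \tilde F\rrbracket$, to the finitary condition~(\ref{cons-gb}) over a reduced standard basis $\tilde G$ of the ordinary ideal $[\tilde F]$. The forward direction is immediate: if~(\ref{s-cond}) holds, then since $\tilde G\subset[\tilde F]\subseteq\llbracket\tilde F\rrbracket$, every $\tilde g\in\tilde G$ lies in $\llbracket\tilde F\rrbracket$, so applying~(\ref{s-cond}) to each $\tilde g$ yields some $g\in\llbracket F\rrbracket$ with $\tilde g\rhd g$, which is exactly~(\ref{cons-gb}). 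The substance of the theorem is the converse, and here is where the structure of standard bases must be used.

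For the converse, assume~(\ref{cons-gb}) and take an arbitrary $\tilde f\in\llbracket\tilde F\rrbracket$; I must produce $f\in\llbracket F\rrbracket$ with $\tilde f\rhd f$. First I would reduce to the case $\tilde f\in[\tilde F]$. The passage from $[\tilde F]$ to $\llbracket\tilde F\rrbracket$ only adjoins, by the definition of perfect ideal, difference polynomials $\tilde f$ such that a product $\prod_j(\theta_j\circ\tilde f)^{k_j}$ already lies in (the ideal built so far, ultimately in) $[\tilde F]$. So the key sub-step is: if the product $\prod_j(\theta_j\circ\tilde f)^{k_j}$ implies (in the $\rhd$ sense) some element of $\llbracket F\rrbracket$, then $\tilde f$ itself implies some element of $\llbracket F\rrbracket$. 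Using the behaviour of Taylor expansion under shifts $\theta_j$ (which do not change the leading differential term, only contribute higher-order-in-$h$ corrections) and under multiplication (leading terms multiply, orders in $h$ add), one sees that if each factor $\theta_j\circ\tilde f$ has Taylor leading term $c_j f_0 h^{k}$ for a common differential polynomial $f_0$, the product has leading term $(\prod c_j^{k_j}) f_0^{\sum k_j} h^{k\sum k_j}$; conversely, knowing the product implies $g\in\llbracket F\rrbracket$ forces, after extracting the appropriate root, $\tilde f\rhd f$ with $f$ a differential polynomial whose suitable power lies in $\llbracket F\rrbracket$, hence $f\in\llbracket F\rrbracket$ by the radical (perfect) property. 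This is the one genuinely delicate point and where I expect to spend the most care: making rigorous that "$\rhd$ respects taking roots modulo the perfect/radical ideals" — i.e. that $(\tilde f)^k\rhd h$ with $h\in\llbracket F\rrbracket$ implies $\tilde f\rhd f$ with $f^k$ equal to $h$ up to the relevant normalization, so $f\in\llbracket F\rrbracket$.

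It then remains to handle $\tilde f\in[\tilde F]$. Here I invoke the standard basis: since $\tilde G$ is a standard basis of $[\tilde F]$, we have $\mathrm{NF}(\tilde f,\tilde G)=0$, i.e. $\tilde f$ reduces to zero through a finite chain of elementary reductions $\tilde f-\sum_i m_i\cdot\theta_i\circ\tilde g_{j(i)}$, which expresses $\tilde f$ as a finite $\tilde{\cR}$-combination $\tilde f=\sum_i c_i\,(\theta_i\circ\tilde g_{j(i)})$ with $\tilde g_{j(i)}\in\tilde G$. Now apply the Taylor expansion: by~(\ref{cons-gb}) each $\tilde g_{j(i)}\rhd g_{j(i)}$ for some $g_{j(i)}\in\llbracket F\rrbracket$, and $\theta_i$ preserves the leading differential term. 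Taking the lowest order in $h$ occurring among the terms $c_i\cdot\theta_i\circ\tilde g_{j(i)}$ and summing the corresponding leading differential coefficients gives a differential polynomial $f$ which is a differential-polynomial combination of the $g_{j(i)}$ (with coefficients the $h\to 0$ limits of the $c_i$, evaluated on the grid, which become rational functions in $\mathbf x$ over $\Q$), hence $f$ lies in the differential ideal generated by $\llbracket F\rrbracket$, i.e. in $\llbracket F\rrbracket$ itself, and $\tilde f\rhd f$ — unless that leading sum cancels, in which case $\tilde f$ implies a strictly higher power of $h$ and one repeats the argument with the next order (the chain being finite, this terminates, and in the worst case $\tilde f\rhd 0$, with $0\in\llbracket F\rrbracket$ trivially satisfying the condition). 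Care is needed that the $h\to0$ limits of the coefficients $c_i$ are well-defined elements of $\K=\Q(\mathbf x)$; this is where the standing assumptions that the grid is orthogonal, uniform, and that denominators do not vanish at grid points are used, exactly as in the linear case of~\cite{GR'10}. Combining the three steps — the trivial forward implication, the reduction $\llbracket\tilde F\rrbracket\leadsto[\tilde F]$ via the perfect-ideal closure compatibility with $\rhd$, and the standard-basis combination argument — yields the equivalence.
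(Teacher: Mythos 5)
Your overall architecture coincides with the paper's: the implication from s-consistency to condition~(\ref{cons-gb}) is dismissed as trivial via $\tilde G\subset[\tilde F]\subseteq\llbracket\tilde F\rrbracket$; for the converse you first treat $\tilde f\in[\tilde F]$ through a standard representation obtained from $\mathrm{NF}(\tilde f,\tilde G)=0$ followed by Taylor expansion, and then lift to $\llbracket\tilde F\rrbracket$ through the shuffling construction of the perfect ideal, using that $\rhd$ carries $\prod_j(\theta_j\circ\tilde p)^{k_j}$ to $p^{k_1+\cdots+k_r}$ and that $\llbracket F\rrbracket$ is radical. These are exactly the paper's steps.

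There is, however, one genuine gap, and it sits at the point where the standard-basis hypothesis is supposed to do its work. When you Taylor-expand the combination $\tilde f=\sum_i c_i\,(\theta_i\circ\tilde g_{j(i)})$ you concede that the sum of the leading differential coefficients may cancel, and you propose to ``repeat the argument with the next order in $h$.'' This fallback fails: the coefficient of the next power of $h$ in the expansion of $\theta_i\circ\tilde g_{j(i)}$ is not $g_{j(i)}$ or a derivative of it, but the $O(h^{k+1})$ Taylor correction of $\tilde g_{j(i)}$, which in general is not an element of $\llbracket F\rrbracket$ at all; so after a cancellation the surviving order of $\tilde f$ need not be a differential consequence of $F$, and the iteration neither lands in $\llbracket F\rrbracket$ nor is guaranteed to terminate (the Taylor series is infinite, not a finite chain). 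The paper closes exactly this hole by using the defining property of the standard representation, $\lm(a_{\tilde g,\mu}\cdot\sigma^\mu\circ\tilde g)\preceq\lm(\tilde f)$: this control of the leading difference monomials is what guarantees that the leading differential monomials contributed by distinct basis elements cannot cancel in the lowest order in $h$, whence $\tilde f\rhd\sum b_{g,\nu}\,\partial^\nu\circ g\in[F]\subseteq\llbracket F\rrbracket$. This non-cancellation is precisely what separates the theorem from the false statement that any w-consistent generating set is s-consistent (refuted by the second Navier--Stokes discretization in Section~6), so your ``pass to the next order'' step must be replaced by that argument.
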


\begin{proof}
Let $\succ$ be an admissible monomial ordering and $\tilde{G}$ be the corresponding interreduced standard basis. To prove that (\ref{cons-gb}) implies (\ref{s-cond}) consider first a nonzero polynomial $\tilde{f}\in [F]$ and show that $\tilde{f}\rhd f\in \llbracket F\rrbracket$.  Polynomial $\tilde{f}$ as well as any $S$-polynomial associated with elements in $\tilde{G}$, because of the property (\ref{SB}) of $\tilde{G}$, admits representation with respect to $\tilde{G}$ and $\succ$ as a finite sum
\begin{equation}
\tilde{f}=\sum_{\tilde{g}\in \tilde{G}_1\subseteq \tilde{G}} \sum_{\mu} a_{\tilde{g},\mu}\cdot \sigma^\mu\circ {\tilde{g}}\,,\ \ a_{\tilde{g},\mu}\in \tilde{\cal{R}},\ \
\lm(a_{\tilde{g},\mu}\cdot \sigma^\mu\circ {\tilde{g}})\preceq \lm(\tilde{f})\,. \label{sb-ideal}
\end{equation}
Here we use the multiindex notation
\[
\mu:=(\mu_1,\ldots,\mu_n)\in \Z^n_{\geq 0},\ \sigma^\mu:=\sigma_1^{\mu_1}\circ\cdots\circ \sigma_n^{\mu_n}\,.
\]

Formula (\ref{sb-ideal}) is a difference analogue of the {\em standard representation} in commutative algebra~\cite{BW'93}. Consider the Taylor expansion (in grid spacing $h$) of the right-hand side of (\ref{sb-ideal})  about a grid point, nonsingular for the coefficients occurring in the sum. In doing so, the shift operators $\sigma_j$ $(j=1,\ldots,n)$ are expanded in the Taylor series
\begin{equation}
          \sigma_j=\sum_{k\geq 0}h^k\partial_j^k \label{sigma-expansion}
\end{equation}
along with the shifted coefficients as rational functions in the independent variables.

The representation (\ref{sb-ideal}) guarantees that in the leading order in $h$ the leading differential monomials~\cite{Ollivier'90}
which occur in the sum and come from
different elements of the \Gr basis cannot be cancelled out. Thereby, due to the
condition (\ref{cons-gb}), the Taylor expansion
of $\tilde{f}$ implies a finite sum of the form
\[
 f:=\sum_{g\in G_1} \sum_{\mu}b_{g,\nu}\cdot \partial^\nu\circ g,\quad b_{g,\nu}\in {\cR}\,,
\]
where $G_1:=\{g\in {\cR}\mid \exists \tilde{g}\in \tilde{G}_1\ \text{such that}\ \tilde{g} \rhd g\}$. Therefore, $\tilde{f} \rhd f\in [F]\subseteq \llbracket F \rrbracket$.

Let now $\tilde{p}\in \llbracket \tilde{F} \rrbracket \setminus [\tilde{F}]$ and $\theta_1,\ldots,\theta_r\in \Theta$ and $k_1,\ldots,k_r \in \N_{\geq 0}$ be such that
\begin{equation}
\tilde{q}:=(\theta_1\circ \tilde{p})^{k_1}\cdots (\theta_k\circ \tilde{p})^{k_r}\in [\tilde{F}]\,.
\label{shuffling}
\end{equation}
As we have shown, $\tilde{q}\rhd q\in [F]$, and it follows from (\ref{sigma-expansion}) that $q=p^{k_1+\cdots+ k_r}$ where $\tilde{p}\rhd p$. Hence, $p\in \llbracket F\rrbracket$. The perfect ideal $\llbracket \tilde{F} \rrbracket$ can be constructed \cite{Levin'08} from $[\tilde{F}]$ by the procedure called {\em shuffling} and based on enlargement of the generator set $\tilde{F}$ with all polynomials $\tilde{p}$ satisfying (\ref{shuffling}) and on repetition of such enlargement. It is clear that each such enlargement of the intermediate ideals yields in the continuous limit a subset of $\llbracket F \rrbracket$.

Conversely, conditions (\ref{cons-gb}) trivially follow from (\ref{s-cond}) and from $\tilde{G}\subset \llbracket \tilde{F} \rrbracket$. $\Box$
\end{proof}

\begin{algorithm}{\textsl{\bfseries{ConsistencyCheck}}\,($F,\tilde{F}$)\label{consistency}}
\begin{algorithmic}[1]
\INPUT $F\subset {\cal{R}}\setminus \{0\}$, $\tilde{F}\in \tilde{\cal{R}}\setminus \{0\}$, finite sets of nonzero polynomials  \\
\OUTPUT $\mathbf{true}$ if $\tilde{F}$ is s-consistent FDA to $F$, and $\mathbf{false}$    otherwise
\STATE {\bf choose} differential ranking $\succ_1$ and difference ordering $\succ_2$
\STATE ${\cT}:=$\textsl{\bfseries{DifferentialThomasDecomposition}}\,($F,\succ_1$)
\STATE ${\cP}_0:=\{\,P\mid \langle P,Q\rangle\in {\cT}\,\}$
\STATE $\tilde{G}:=$\textsl{\bfseries{StandardBasis}}\,($\tilde{F},\succ_2$)\qquad (* {\em may not terminate} *)
\STATE $C:=\mathbf{true}$
\WHILE{$\tilde{G}\neq \emptyset$\ and $C=\mathbf{true}$}
      \STATE {\bf choose} $\tilde{g}\in \tilde{G}$
      \STATE $\tilde{G}:=\tilde{G}\setminus \{\tilde{g}\}$;\ \ ${\cP}:={\cP}_0$
      \STATE {\bf compute} $g$ such that $\tilde{g}\rhd g$
      \WHILE{${\cP}\neq \emptyset$\ and $C=\mathbf{true}$}
          \STATE {\bf choose} $S\in {\cP}$
          \STATE ${\cP}:={\cP}\setminus \{S\}$
          \STATE $d:=$\textsl{\bfseries{dprem}}$_{\cJ}(g,S)$
          \IF{$d\neq 0$}
            \STATE $C:={\bf false}$
          \ENDIF
     \ENDWHILE
\ENDWHILE
\RETURN $C$
\end{algorithmic}
\end{algorithm}

It should be noted that condition (\ref{s-cond}) does not exploit the equality of cardinalities for sets of differential and difference equations as is assumed in Definition \ref{def-econs}. The equality of cardinalities is also not used in the proof of Theorem \ref{th:s-cons}. Therefore,
both Definition~\ref{def-scon} and Theorem \ref{th:s-cons} are relevant to the case when the FDA has the number of equations different from that in the PDE system.

In the nonlinear case when algorithm \textsl{\bfseries{StandardBasis}} may not terminate, it is useful to compute the continuous limit $\tilde{g}\rhd g$ for the difference polynomials $\tilde{g}$ obtained in line 5 of algorithm \textsl{\bfseries{StandardBasis}} and to verify the condition  \textsl{\bfseries{dprem}}$_{\cJ}(g,S)=0$ as it is done in line 14 of algorithm
\textsl{\bfseries{ConsistencyCheck}}. This way one can stop computation when   inconsistency of the intermedite data in algorithm \textsl{\bfseries{StandardBasis}} is detected. An example of such situation is considered in the next section.

\section{Example: Navier-Stokes Equations}

To illustrate the concept of s-consistency and the algorithmic procedure of its verification, we consider two FDA generated in~\cite{GB'09} for the two-dimensional Navier-Stokes equations by the method of paper~\cite{GBM'06}. These equations describe unsteady motion of incompressible viscous liquid of constant viscosity. The Janet involutive form of the Navier-Stokes equations for the orderly ranking compatible with
$\delta_x\succ \delta_y\succ \delta_t$ and  $u\succ v \succ p$ is given by (see~\cite{GB'09})
\begin{eqnarray}
F:=
\left\lbrace
\begin{array}{l}
f_1 := u_x+v_y=0\,,\\[0.1cm]
f_2 := u_t + u u_x + v u_y + p_x - \frac{1}{\mathrm{Re}} (u_{xx}+u_{yy})=0\,,\\[0.1cm]
f_3 := v_t + u v_x + v v_y + p_y - \frac{1}{\mathrm{Re}} (v_{xx}+v_{yy})=0\,,\\[0.1cm]
f_4 := u_x^2 + 2 v_x u_y + v_y^2 + p_{xx} + p_{yy} = 0 \,.
\end{array}
\right.\label{ns}
\end{eqnarray}
Here $f_1$ is the continuity equation, $f_2$ and $f_3$ are the proper
Navier-Stokes equations~\cite{Pozrikidis'01}, $f_4$ the pressure Poisson
equation~\cite{Gresho'87}, $(u,v)$ is the velocity field, and $p$ is the
pressure. The density is included in the Reynolds
number $\mathrm{Re}$.

The differential Thomas decomposition algorithm~\cite{BGLHR'10} for the input $f_1,f_2,f_3$ outputs system (\ref{ns}) in its Janet autoreduced form
\begin{eqnarray}
F_1:=
\left\lbrace
\begin{array}{l}
u_x+v_y=0\,,\\[0.1cm]
\frac{1}{\mathrm{Re}} (u_{yy}-v_{xy} - uv_y) - v u_y -u_t - p_x=0\,,\\[0.1cm]
\frac{1}{\mathrm{Re}} (v_{xx}+v_{yy}) - u v_x - v v_y -v_t - p_y=0\,,\\[0.1cm]
2 v_x u_y + p_{xx}+p_{yy} + 2 v_y^2 = 0 \,.
\end{array}
\right.\label{ins}
\end{eqnarray}

The following FDA to system (\ref{ns}) was obtained in~\cite{GB'09} for the orthogonal and uniform grid with the spatial spacing $h$ and temporal spacing $\tau$:
\begin{equation*}
\left\lbrace
\begin{array}{l}
\tilde{f}_1:=\frac{u^n_{j+1\, k} - u^n_{j-1\, k}}{2h} +
\frac{v^n_{j\, k+1} - v^n_{j\, k-1}}{2h} = 0
\,,\\[5pt]
\tilde{f}_2:=\frac{u^{n+1}_{j\, k} - u^{n}_{j\, k}}{\tau} +
\frac{{u^2}^n_{j+1\, k} - {u^2}^n_{j-1\, k}}{2h} +
\frac{{uv\,}^n_{j\, k+1} - {uv\,}^n_{j\, k-1}}{2h} +
{}\\[3pt] \qquad
+\frac{p^n_{j+1\, k} - p^n_{j-1\, k}}{2h} -
\frac{1}{\mathrm{Re}}\left(\frac{u^n_{j+2\, k} - 2u^n_{j\, k} + u^n_{j-2\, k}}{4h^2} +
\frac{u^n_{j\, k+2} - 2u^n_{j\, k} + u^n_{j\, k-2}}{4h^2}\right)=0\,,\\[5pt]
\tilde{f}_3:=\frac{v^{n+1}_{j\, k} - v^{n}_{j\, k}}{\tau} +
\frac{{uv\,}^n_{j+1\, k} - {uv\,}^n_{j-1\, k}}{2h} +
\frac{{v^2}^n_{j\, k+1} - {v^2}^n_{j\, k-1}}{2h} + {}\\[3pt] \qquad
+\frac{p^n_{j\, k+1} - p^n_{j\, k-1}}{2h} -
\frac{1}{\mathrm{Re}}\left(\frac{v^n_{j+2\, k} - 2v^n_{j\, k} + v^n_{j-2\, k}}{4h^2} +
\frac{v^n_{j\, k+2} - 2v^n_{j\, k} + v^n_{j\, k-2}}{4h^2}\right)=0\,,\\[5pt]
\tilde{f}_4:=\frac{{u^2}^n_{j+2\, k} - 2{u^2}^n_{j\, k} + {u^2}^n_{j-2\, k}}{4h^2} + 2\frac{{uv\,}^n_{j+1\, k+1} - {uv\,}^n_{j+1\, k-1} - {uv\,}^n_{j-1\, k+1} + {uv\,}^n_{j-1\, k-1}}{4h^2} +
{} \\[3pt] \qquad {} +
\frac{{v^2}^n_{j\, k+2} - 2{v^2}^n_{j\, k} + {v^2}^n_{j\, k-2}}{4h^2} +
\left(\frac{p^n_{j+2\, k} - 2p^n_{j\, k} + p^n_{j-2\, k}}{4h^2} +
\frac{p^n_{j\, k+2} - 2p^n_{j\, k} + p^n_{j\, k-2}}{4h^2}\right)=0\,.
\end{array}
\right.
\end{equation*}

This FDA is w-consistent what can be easily verified by the Taylor expansion of the difference polynomials in  $\tilde{F}:=\{\tilde{f}_1,\tilde{f}_2,\tilde{f}_3,\tilde{f}_4\}$ in the powers of $h,\tau$ about a grid point. In doing so, in the continuous limit $(\tau \rightarrow 0,\ h\rightarrow 0)$ the difference equations imply the involutive differential Navier-Stokes system (\ref{ns}). Moreover, the algorithm \textsl{\bfseries{StandardBasis}} applied to the set $\tilde{F}_1:=\{\sigma_y\circ\tilde{f}_1,\sigma_y\circ\tilde{f}_2,\tilde{f}_3,\tilde{f}_4\}$ yields that $\tilde{F}_1$ is a difference \Gr basis of ideal $[\tilde{F}_1]$ for the lexicographic ordering compatible with the orderly ranking such that $\sigma_t\succ\sigma_x\succ\sigma_y$ and $p\succ u\succ v$ (see Remark \ref{lex-ord}). Thus, $\tilde{F}$ is the s-consistent FDA to (\ref{ns}).

The above given FDA has a $5\times 5$ stencil owing to the approximation of the second-order partial derivatives used
for equations $f_2,f_3$ and $f_4$. From the numerical standpoint a $3\times 3$ stencil looks like  more attractive. By this reason let us follow~\cite{GB'09} and consider another FDA to (\ref{ns}) with a $3\times 3$ stencil:
\begin{equation*}
\left\lbrace
\begin{array}{l}
\tilde{e}_1:=\frac{u^n_{j+1\, k} - u^n_{j-1\, k}}{2h} +
\frac{v^n_{j\, k+1} - v^n_{j\, k-1}}{2h} = 0
\,,\\[5pt]
\tilde{e}_2:=\frac{u^{n+1}_{j\, k} - u^{n}_{j\, k}}{\tau} +
\frac{{u^2}^n_{j+1\, k} - {u^2}^n_{j-1\, k}}{2h} +
\frac{{uv\,}^n_{j\, k+1} - {uv\,}^n_{j\, k-1}}{2h} +
{}\\[3pt] \qquad
+\frac{p^n_{j+1\, k} - p^n_{j-1\, k}}{2h} -
\frac{1}{\mathrm{Re}}\left(\frac{u^n_{j+1\, k} - 2u^n_{j\, k} + u^n_{j-1\, k}}{h^2} +
\frac{u^n_{j\, k+1} - 2u^n_{j\, k} + u^n_{j\, k-1}}{h^2}\right)=0\,,\\[5pt]
\tilde{e}_3:=\frac{v^{n+1}_{j\, k} - v^{n}_{j\, k}}{\tau} +
\frac{{uv\,}^n_{j+1\, k} - {uv\,}^n_{j-1\, k}}{2h} +
\frac{{v^2}^n_{j\, k+1} - {v^2}^n_{j\, k-1}}{2h} + {}\\[3pt] \qquad
+\frac{p^n_{j\, k+1} - p^n_{j\, k-1}}{2h} -
\frac{1}{\mathrm{Re}}\left(\frac{v^n_{j+1\, k} - 2v^n_{j\, k} + v^n_{j-1\, k}}{h^2} +
\frac{v^n_{j\, k+1} - 2v^n_{j\, k} + v^n_{j\, k-1}}{h^2}\right)=0\,,\\[5pt]
\tilde{e}_4:= \frac{{u^2}^n_{j+1\, k} - 2{u^2}^n_{j\, k} + {u^2}^n_{j-1\, k}}{h^2} +
2\frac{{uv\,}^n_{j+1\, k+1} - {uv\,}^n_{j+1\, k-1} - {uv\,}^n_{j-1\, k+1} + {uv\,}^n_{j-1\, k-1}}{4h^2} +
{} \\[3pt] \qquad {} +
\frac{{v^2}^n_{j\, k+1} - 2{v^2}^n_{j\, k} + {v^2}^n_{j\, k-1}}{h^2} +
\left(\frac{p^n_{j+1\, k} - 2p^n_{j\, k} + p^n_{j-1\, k}}{h^2} +
\frac{p^n_{j\, k+1} - 2p^n_{j\, k} + p^n_{j\, k-1}}{h^2}\right)=0
\end{array}
\right.
\end{equation*}

$\tilde{F}':=\{\tilde{e}_1,\tilde{e}_2,\tilde{e}_3,\tilde{e}_4\}$ is w-consistent with (\ref{ns}). However, application of algorithm \textsl{\bfseries{StandardBasis}} shows that, as opposed to $\tilde{F}_1$, $\tilde{F}_1':=\{\sigma_y\circ \tilde{e}_1,\sigma_y\circ \tilde{e}_2,\tilde{e}_3,\tilde{e}_4\}$
it not a \Gr basis. For the $S$-polynomial $s_{1,2}$ associated with $\sigma_y\circ \tilde{e}_1$ and $\sigma_y\circ \tilde{e}_2$ we have $\tilde{q}:=\mathrm{NF}(s_{1,2},\tilde{F}_1')\neq 0$. Furthermore,
$
\tilde{q} \rhd q:=u^2_{xx}+v^2_{yy}+p_{xx}+p_{yy}\,.
$
The equation $q=0$ is not a consequence of the Navier-Stokes system.

One way to check it is to compute $d:=$\textsl{\bfseries{dprem}}$_{\cJ}(q,F_1)$ with $F_1$ given by (\ref{ins}). Just this computation is done in line 13 of algorithm {\textsl{\bfseries{ConsistencyCheck}}:
\[
\begin{array}{l}
d= \frac{1}{\mathrm{Re}^2}\left(u_{yy}^2 + v_{yy}^2-2 u_y v_x -2 v_y^2\right) + \frac{2}{\mathrm{Re}}\left(uv_yu_{yy} - v u_y u_{yy} - u_t u_{yy}- p_x u_{yy}\right) +  \\[0.1cm]
2\left(vu_t u_y -u u_t v_y + v u_y p_x - u v_y p_x - u v v_y u_y + u_t p_x\right) + u_t^2 + p_x^2 + v^2 u_y^2 + u^2 v_y^2\,.
\end{array}
\]

Another way is to substitute into $q$ the exact solution~\cite{KM'85} to (\ref{ns})
\[u=-e^{-2t}\cos(x)\sin(y), \
 v=e^{-2t}\sin(x)\cos(y),\
 p=-e^{-4t}(\cos(2x)+\cos(2y))/4\,.
\]
and to see that it does not satisfy $q=0$. Therefore, $\tilde{F}'$ is s-inconsistent.

\section{Conclusion}
Our computer experiments~\cite{GR'10} with linear systems based on the implementation~\cite{LDA'06} of Janet completion algorithm for the $\sigma$-ideals generated by linear difference polynomials shown that unlike w-consistency it is fairly difficult to satisfy s-consistency by discretizing overdetermined PDE systems. This is hardly surprising since an s-consistent FDA preserves at the discrete level all consequences of the differential system. As we demonstrate in Section 6 of the present paper, completion of the Navier-Stokes equations to involution by adding the Poisson pressure equation, which has to be explicitly taken into account in the numerical  solving~\cite{Gresho'87}, makes the s-consistency of their FDA sensitive to discretization.

To guarantee termination of the algorithmic versification of s-consistency one might use the fact that the difference polynomial ring we deal with in this paper is a Ritt ring and each its perfect ideal has a finite basis~\cite{Levin'08}. However, unlike the differential Ritt rings~\cite{Hubert'01}, there are no algorithms known to compute such basis and, hence, a \Gr basis for $\llbracket \tilde{F} \rrbracket$. Another obstacle in computer application to the consistency analysis of FDA is the lack of software for construction of nonlinear standard bases. Only very recently a start has been made with a new algorithmic insight inspired by the ideas of  paper~\cite{SL'11} with intention to create such software packages written in {\tt Maple} and {\tt Singular}\footnote{R. La Scala. Private communication.}.

\section{Acknowledgements}
The research presented in this paper was partially supported by grant 01-01-00200 from the Russian Foundation for Basic Research and by grant 3810.2010.2 from the Ministry of Education and Science of the Russian Federation. The author expresses his thanks to Yuri Blinkov, Viktor Levandovskyy, Alexander Levin and Roberto La Scala for helpful comments and remarks. 

\end{document}